\newtheorem{theorem}{Theorem}[section]
\newtheorem{lemma}[theorem]{Lemma}
\newtheorem{proposition}[theorem]{Proposition}
\newtheorem{corollary}[theorem]{Corollary}
\theoremstyle{definition}
\newtheorem{definition}[theorem]{Definition}
\theoremstyle{remark}
\newtheorem{remark}[theorem]{Remark}
\newcommand{\F}{\mathbb{F}}
\newcommand{\N}{\mathbb{N}}
\newcommand{\Q}{\mathbb{Q}}
\newcommand{\R}{\mathbb{R}}
\newcommand{\G}{\mathcal{G}}
\newcommand{\p}{\mathcal{P}}
\newcommand{\U}{\mathcal{U}}
\newcommand{\pfin}{\text{Fin}}
\newcommand{\num}{\mathfrak{n}}
\newcommand{\ring}{\mathfrak}
\newcommand{\A}{\mathfrak{A}}
\newcommand{\B}{\mathfrak{B}}
\newcommand{\C}{\mathfrak{C}}
\newcommand{\sh}{\text{sh}}
\begin{document}

\title{Elementary numerosity and measures}

\author{Vieri Benci}
\author{Emanuele Bottazzi}
\author{Mauro Di Nasso}

\address{Dipartimento di Matematica,
Universit\`{a} di Pisa, Italy and Department of Mathematics, College of Science, King Saud University, Riyadh,  Saudi Arabia}
\email{benci@dma.unipi.it}

\address{Dipartimento di Matematica,
Universit\`{a} di Trento, Italy.}
\email{Emanuele.Bottazzi@unitn.it}

\address{Dipartimento di Matematica,
Universit\`{a} di Pisa, Italy.}
\email{dinasso@dm.unipi.it}

\subjclass[2000]{26E30; 28E15; 26E35; 28E15; 60A05}
\keywords{Non-Archimedean mathematics, measure theory,
nonstandard analysis, numerosities.}

\maketitle

\begin{abstract}
In this paper we introduce the notion of elementary numerosity
as a special function defined on all subsets of a given set $X$
which takes values in a suitable non-Archimedean field, and
satisfies the same formal properties of finite cardinality.
We investigate the relationships between this notion and
the notion of measure. The main result is that every
non-atomic finitely additive measure is obtained from a suitable
elementary numerosity by simply taking its ratio to a unit.
In the last section we give applications to this result.
\end{abstract}

\tableofcontents

\section*{Introduction}

In mathematics there are essentially two main ways to
estimate the size of a set, depending on whether
one is working in a discrete or in a continuous setting.

In the continuous case, one uses the notion
of (finitely additive) measure, namely a function
$m$ taking real values
and which satisfies the following properties:
\begin{enumerate}
\item
$m(\emptyset)=0$
\item
$m(A)\ge 0$
\item
$m(A\cup B)=m(A)+m(B)$ whenever $A\cap B=\emptyset$
\end{enumerate}

In the discrete case, one uses
the notion of cardinality $\frak{n}$ that strengthens
the three properties
itemized above as follows:

\begin{enumerate}
\item[($\frak{n}.1$)]
$\frak{n}(\emptyset)=0$
\item[($\frak{n}.2$)]
$\frak{n}(A)\ge 0$
\item[($\frak{n}.3$)]
$\frak{n}(A\cup B)=\frak{n}(A)+\frak{n}(B)$ whenever $A\cap B=\emptyset$
\item[($\frak{n}.4$)]
$\frak{n}\left(\{x\}\right)=1$ for all singletons
\end{enumerate}

\smallskip
The goal of this paper is to investigate the relationships
between these two notions. Of course, this problem is interesting
when the sets are infinite. Remark that the theory of infinite
cardinality is not adequate to this end; for example,
all sets of reals with positive Lebesgue measure
have the same cardinality.
On the contrary, the notion of numerosity,
first introduced in \cite{be95,bd},
gives a coherent way of extending finite cardinalities
and their main properties to infinite sets.

\smallskip
%Inspired by the above construction,
In this paper we introduce the related concept of
\emph{elementary numerosity} as a special function
defined on \emph{all} subsets of a given set $X$ that takes
values into a suitable ordered field $\mathbb{F}$ and satisfies
the four properties of finite cardinalities itemized above.
Remark that if $X$ is infinite, then the range
of such a function $\frak{n}$ necessarily contains infinite numbers,
and hence the field $\mathbb{F}$ must be non-Archimedean.
Notice that also Cantorian cardinality satisfies properties
$(\frak{n}.1),(\frak{n}.2),(\frak{n}.3),(\frak{n}.4)$;
the fundamental difference is that ``numerosities" are
required to be elements of a field.

\smallskip
By taking ratios to a fixed ``measure unit" $\beta>0$,
one has a canonical way of getting a real-valued
finitely additive measure. This construction turns
out to be really general. In fact, the main result of this paper
shows that every finitely additive non-atomic measure can be
obtained in this way. Namely, we shall prove the following:

\medskip
\noindent
\textbf{Theorem.}
Let $(\Omega,\A,\mu)$ be a non-atomic finitely additive measure.
Then there exist

\smallskip
\begin{itemize}
\item
a non-Archimedean field $\F\supset\R$\,;

\smallskip
\item
an elementary numerosity $\num:\p(\Omega)\to[0,+\infty)_\F$\,;

\smallskip
\item
a positive number $\beta\in\F$
\end{itemize}

\smallskip
\noindent
such that
$$\mu(A)\ =\ \sh\left(\frac{\num(A)}{\beta}\right)\ \text{for all }A\in\A.\footnote
{~$\sh(\xi)$ is the unique real number which is
infinitely close to $\xi$ (see Section \ref{sec-elementary}).}$$

%\medskip
%Only two basic properties are postulated in the definition,
%namely that an elementary numerosity be finitely additive,
%and that all singletons be given numerosity $1$.

\smallskip
The last part of the paper is devoted to
selected applications: the first one is about Lebesgue measure,
and the second one is about non-Archimedean probability.
In particular, following ideas from \cite{nap}, we consider a
model for infinite sequences
of coin tosses which is coherent with Laplace original view.
Indeed, probability of an event is defined as
the numerosity of positive outcomes divided by the
numerosity of all possible outcomes; moreover, the probability of
cylindrical sets exactly coincides
with the usual Kolmogorov probability.

\bigskip
\section{Terminology and preliminary notions}

We fix here our terminology, and recall a few basic facts from measure
theory that will be used in the sequel.

\smallskip
Let us first agree on notation.
We write $A\subseteq B$ to mean that $A$ is a subset of $B$,
and we write $A\subset B$ to mean that $A$ is a \emph{proper} subset of $B$.
The \emph{complement} of a set $A$ is denoted by $A^c$, and its \emph{powerset}
is denoted by $\p(A)$. We write $A_1\sqcup\ldots\sqcup A_n$ to denote a
\emph{disjoint union}.
%\emph{i.e.} an union where the sets
%involved are pairwise disjoint.
By $\N$ we denote the set of positive integers, and by
$\N_0=\N\cup\{0\}$ the set of non-negative integers.
For an ordered field $\F$, we denote by
$[0,\infty)_\F=\{x\in\F\mid x\ge 0\}$
the set of its non-negative elements.
We shall write $[0,+\infty]_\R$ to denote the set of non-negative
real numbers plus the symbol $+\infty$, and we agree
that $x+\infty=+\infty+x=+\infty+\infty=+\infty$ for all $x\in\R$.

\medskip
\begin{definition}
A \emph{finitely additive measure} is a triple $(\Omega,\A,\mu)$ where:

\smallskip
\begin{itemize}
\item
The \emph{space} $\Omega$ is a non-empty set;

\smallskip
\item
$\A$ is a \emph{ring of sets} over $\Omega$, \emph{i.e.}
a non-empty family of subsets of $\Omega$ which is closed under
finite unions and intersections, and under relative complements, \emph{i.e.}
$A,B\in\A\Rightarrow A\cup B, A\cap B, A\setminus B\in\A$;\footnote
{~Actually, the closure under intersections follow from the other two properties,
since $A\cap B=A\setminus(A\setminus B)$.}

\smallskip
\item
$\mu:\A\to [0,+\infty]_\R$ is an \emph{additive function}, \emph{i.e.}
$\mu(A\cup B)=\mu(A)+\mu(B)$ whenever $A,B\in\A$ are disjoint.\footnote
{~Such functions $\mu$ are sometimes called \emph{contents} in the literature.}
We also assume that $\mu(\emptyset)=0$.
\end{itemize}

\smallskip
The measure $(\Omega,\A,\mu)$ is called \emph{non-atomic} when all
finite sets in $\A$ have measure zero. We say that
$(\Omega,\A,\mu)$ is a \emph{probability} when $\mu:\A\to[0,1]_\R$
takes values in the unit interval.

\end{definition}

\medskip
For simplicity, in the following we shall often identify the
triple $(\Omega,\A,\mu)$ with the function $\mu$.

\smallskip
Remark that a finitely additive measure $\mu$ is
necessarily \emph{monotone}, \emph{i.e.}

\smallskip
\begin{itemize}
\item
$\mu(A)\le\mu(B)$ for all $A,B\in\A$ with $A\subseteq B$.
\end{itemize}

%\smallskip
%In fact, by the property of ring of sets, the relative complement $B\setminus A\in\A$,
%and so $\mu(B)=\mu(A\cup(B\setminus A))=\mu(A)+\mu(B\setminus A)\ge\mu(A)$.

\medskip
\begin{definition}
A finitely additive measure $\mu$ defined on a ring of sets $\A$ is
called a \emph{pre-measure} if it is $\sigma$-\emph{additive},
\emph{i.e.} if for every countable family $\{A_n\}_{n\in\N}\subseteq\A$
of pairwise disjoint sets whose union lies in $\A$, it holds:
$$\mu\left(\bigsqcup_{n\in\N}A_n\right)\ =\ \sum_{n=1}^\infty\mu(A_n).$$
A \emph{measure} is a pre-measure which is defined on a $\sigma$-\emph{algebra},
\emph{i.e.} on a ring of sets which is closed under countable unions and intersections.
\end{definition}

\medskip
\begin{definition}
An \emph{outer measure} on a set $\Omega$ is a function
$$M:\p(\Omega)\to[0,+\infty]_\R$$
defined on all subsets of $\Omega$ which is \emph{monotone}
and $\sigma$-\emph{subadditive}, \emph{i.e.}
$$M\left(\bigcup_{n\in\N}A_n\right)\ \leq\ \sum_{n\in\N}M(A_n).$$
It is also assumed that $M(\emptyset)=0$.
\end{definition}

\medskip
\begin{definition}
Given an outer measure $M$ on $\Omega$, the following family is
called the \emph{Caratheodory $\sigma$-algebra} associated to $M$:
$$\mathfrak{C}_M\ =\ \left\{X\subseteq\Omega\mid
M(X)=M(X\cap Y)+M(X\setminus Y)\ \text{for all }Y\subseteq\Omega\right\}.$$
\end{definition}

\medskip
A well known theorem of Caratheodory states that the above family is
actually a $\sigma$-algebra, and that the restriction of $M$ to
$\mathfrak{C}_M$ is a \emph{complete} measure, \emph{i.e.}
a measure where $M(X)=0$ implies $Y\in\mathfrak{C}_M$ for all $Y\subseteq X$.
This result is usually combined with the property that every pre-measure
$\mu$ over a ring $\A$ of subsets of $\Omega$
is canonically extended to the outer measure
$\overline{\mu}:\p(\Omega)\to[0,\infty]_\R$ defined by putting:
$$\overline{\mu}(X)\ =\
\inf\left\{\sum_{n=1}^\infty\mu(A_n)\,\Big|\,
\{A_n\}_n\subseteq\A\ \&\ X\subseteq\bigcup_{n\in\N}A_n\right\}.$$

Indeed, a fundamental result in measure theory is that
the above function $\overline{\mu}$ is actually an outer measure
that extends $\mu$, and that the associated Caratheodory $\sigma$-algebra
$\mathfrak{C}_{\overline{\mu}}$ includes $\A$.
Moreover, such an outer measure $\overline{\mu}$ is \emph{regular},
\emph{i.e.} for all $X\in\p(\Omega)$ there exists $C\in\mathfrak{C}_{\overline{\mu}}$
such that $X\subseteq C$ and $\overline{\mu}(X)=\overline{\mu}(C)$.
(See \emph{e.g}. \cite{Yeh} Prop. 20.9.)

\smallskip
In the proof of our main theorem, we shall use
an ultrapower $\R^I/\U$ of the real numbers modulo
a suitable ultrafilter. Recall that
an \emph{ultrafilter} $\U$ on a set $I$ is a maximal
family of subsets of $I$ which has the finite
intersection property (FIP): $A_1\cap\ldots\cap A_n\ne\emptyset$
for any choice of finitely many $A_i\in\U$.
Equivalently, $\U$ is a family of non-empty subsets of $I$ that is
closed under supersets, finite intersections, and satisfies
the property $A\notin\U\Rightarrow I\setminus A\in\U$.
Remark that an ultrafilter $\U$ can also be characterized as
a family of sets that have measure $1$
with respect to a suitable finitely additive $\{0,1\}$-valued
measure $\mu:\p(I)\to\{0,1\}$.
By applying Zorn's lemma, it is shown that every family
$\F\subseteq\p(I)$ with the FIP can be extended to an ultrafilter on $I$.

\smallskip
The \emph{ultrapower} $\F=\R^I/\U$ of the real numbers modulo the
ultrafilter $\U$ is the ordered field where:

\begin{itemize}
\item
Elements of $\F$ are the real $I$-sequences $\langle\sigma\rangle_\U$ defined
$\U$-almost everywhere,
\emph{i.e.} $\langle\sigma\rangle_\U=\langle\tau\rangle_\U$ when
$\{i\in I\mid\sigma(i)=\tau(i)\}\in\U$\,;
\item
The order relation and the sum and product operations
are defined point-wise, \emph{i.e.} $\langle\sigma\rangle_\U<\langle\tau\rangle_\U$
if $\sigma(i)<\tau(i)$
$\U$-almost everywhere, $\sigma+\tau=\zeta$ if $\sigma(i)+\tau(i)=\zeta(i)$
$\U$-almost everywhere, and similarly for the product.
\end{itemize}

\smallskip
For detailed information about ultrafilters and the general construction
of ultrapower, the reader is referred to \emph{e.g.} \cite{ck}.

\bigskip
\section{Elementary numerosity}\label{sec-elementary}

Inspired by the idea of numerosity, we now aim at refining the notion
of finitely additive measure in such a way that also single points count.
To this end, one needs to consider \emph{superreal fields} $\F\supseteq\R$,
\emph{i.e.} ordered fields which extend the real line.

\smallskip
Remark that if the field $\F\supset\R$ is a proper extension, then $\F$ is necessarily
\emph{non-Archimedean}, \emph{i.e.} it contains \emph{infinitesimal
numbers} $\epsilon\ne 0$ such that $-1/n<\epsilon<1/n$ for all $n\in\N$.
We say that two elements $\xi,\zeta\in\F$ are \emph{infinitely close}, and
write $\xi\approx\zeta$, when their difference $\xi-\zeta$ is infinitesimal.
A number $\xi\in\F$ is called \emph{finite} when $-n<\xi<n$ for some $n\in\N$, and it is called
\emph{infinite} otherwise. Clearly, a number $\xi$ is infinite if and only if
its reciprocal $1/\xi$ is infinitesimal.
Since $\F\supseteq\R$, by the completeness property of the real line
it is easily verified that every finite $\xi\in\F$ is infinitely close to a
unique real number $r$ (just take $r=\inf\{x\in\R\mid x>\xi\}$).
Such a number $r$ is called the \emph{shadow} (or \emph{standard part}) of $\xi$,
and notation $r=\sh(\xi)$ is used. Notice that $\sh(\xi+\zeta)=\sh(\xi)+\sh(\zeta)$
and $\sh(\xi\cdot\zeta)=\sh(\xi)\cdot\sh(\zeta)$ for all finite $\xi,\zeta$.
By abusing notation, we shall write $\sh(\xi)=+\infty$ when
$\xi$ is infinite and positive, and $\sh(\xi)=-\infty$
when $\xi$ is infinite and negative.

\medskip
\begin{definition}
An \emph{elementary numerosity} on a set $\Omega$ is a function
$$\num:\p(\Omega)\to [0,+\infty)_\F$$
defined for all subsets of $\Omega$,
taking values into the non-negative part of a superreal field $\F$,
and such that the following two conditions are satisfied:

\smallskip
\begin{enumerate}
\item
$\num(\{x\})=1$ for every point $x\in\Omega$\,;

\smallskip
\item
$\num(A\cup B)=\num(A)+\num(B)$ whenever $A$ and $B$ are disjoint.
\end{enumerate}
\end{definition}

\medskip
As straight consequences of the definition, we obtain
that elementary numerosities can be seen as generalizations of
finite cardinalities.

\medskip
\begin{proposition}
Let $\num$ be an elementary numerosity. Then:

\smallskip
\begin{enumerate}
\item
$\num(A)=0$ if and only if $A=\emptyset$;

\smallskip
\item
If $A\subset B$ is a proper subset, then $\num(A)<\num(B)$.

\smallskip
\item
If $F$ is a finite set of cardinality $n$, then $\num(F)=n$;
\end{enumerate}
\end{proposition}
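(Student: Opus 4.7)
The plan is to prove the three claims in the order (3), (1), (2) — actually, the natural order is to first show $\num(\emptyset)=0$, then use that to get (3) by induction, then prove the nontrivial direction of (1), and finally deduce (2).

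First I would establish $\num(\emptyset)=0$ as a preliminary. Writing $\emptyset = \emptyset \sqcup \emptyset$ and applying additivity (condition (2) of the definition) gives $\num(\emptyset) = \num(\emptyset) + \num(\emptyset)$ in the field $\F$, from which $\num(\emptyset)=0$ by cancellation. This is the key routine step that unlocks everything.

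For claim (3), I would induct on the cardinality $n$. The base case $n=1$ is exactly condition (1) of the definition. For the inductive step, if $F=\{x_1,\dots,x_{n+1}\}$, decompose $F = \{x_1,\dots,x_n\} \sqcup \{x_{n+1}\}$, apply additivity, and use the inductive hypothesis together with condition (1) to get $\num(F) = n + 1$. For claim (1), the reverse direction is the preliminary above; for the forward direction, if $A\neq\emptyset$ pick any $x\in A$ and decompose $A = \{x\} \sqcup (A\setminus\{x\})$; additivity gives $\num(A) = 1 + \num(A\setminus\{x\})$, and since $\num$ takes values in $[0,+\infty)_\F$ we have $\num(A\setminus\{x\})\geq 0$, hence $\num(A)\geq 1 > 0$.

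Finally, for claim (2), if $A\subset B$ properly then $B\setminus A$ is nonempty, and $B = A \sqcup (B\setminus A)$. By additivity $\num(B) = \num(A) + \num(B\setminus A)$, and by claim (1) the second summand is strictly positive, so $\num(A) < \num(B)$. There is no genuine obstacle here: everything follows by combining the two defining properties of an elementary numerosity with the fact that values lie in the non-negative part of an ordered field. The only subtle point — which is why $\F$ being an ordered field matters — is the cancellation in $\F$ used to derive $\num(\emptyset)=0$ and the strictness $1>0$ used in claim (1); these are immediate in any ordered field but would fail in, say, the extended half-line $[0,+\infty]_\R$.
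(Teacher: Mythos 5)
Your proposal is correct and follows essentially the same route as the paper: derive $\num(\emptyset)=0$ by cancellation from $\emptyset=\emptyset\sqcup\emptyset$, get (3) from additivity and the singleton axiom, and obtain strict monotonicity by splitting off a nonempty difference set. The only (immaterial) difference is ordering — the paper proves (2) first via $\num(B)\ge\num(A\cup\{x\})=\num(A)+1$ and deduces positivity of nonempty sets as a corollary, while you establish positivity first and then feed it into (2).
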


\begin{proof}
Notice that $\num(\emptyset)=\num(\emptyset\cup\emptyset)=
\num(\emptyset)+\num(\emptyset)$,
and $x=0$ is the only number $x\in\F$ such that $x+x=x$.
If $A\subseteq B$ then $\mu(B)=\mu(A)+\mu(B\setminus A)\ge\mu(A)$.
Moreover, if $A\subset B$ is a proper subset and $x\in B\setminus A$,
then $\mu(B)\ge\mu(A\cup\{x\})=\mu(A)+\mu(\{x\})=\mu(A)+1>\mu(A)$.
In consequence, $\mu(A)>0$ for all non-empty sets $A$.
Finally, the last property directly follows by additivity and the
fact that every singleton has measure $1$.
\end{proof}

\smallskip
%The previous proposition highlights a deep relationship between the
%notion of elementary numerosity and the ``counting measure" given by
%finite cardinality.
Remark that if one takes $\F=\R$ then elementary numerosities $\num$ exist
on a set $\Omega$ if and only if $\Omega$ is finite and
$\num$ is the finite cardinality. However, we shall see that
our assumption which allows $\num$ to take non-Archimedean values,
will make it possible to extend the ``counting measure"
as given by finite cardinality, to arbitrary infinite sets.

\begin{proposition}
Let $\num:\p(\Omega)\to[0,+\infty)_\F$ be an elementary numerosity,
and for every $\beta>0$ in $\F$ define the function
$\num_\beta:\p(\Omega)\to[0,+\infty]_\R$ by posing
$$\num_\beta(A)\ =\ \sh\left(\frac{\num(A)}{\beta}\right).$$
Then $\num_\beta$ is a finitely additive measure defined for
\emph{all} subsets of $\Omega$. Moreover,
$\num_\beta$ is non-atomic if and only if $\beta$ is an infinite number.
\end{proposition}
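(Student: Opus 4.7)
The plan is to verify the three defining properties of a finitely additive measure one at a time, using only the definitions of elementary numerosity and shadow, and then to handle the non-atomic equivalence by a direct case analysis on whether $\beta$ is infinite, finite non-infinitesimal, or finite infinitesimal.

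First I would check that $\num_\beta$ is well-defined as a function into $[0,+\infty]_\R$: since $\num(A)\ge 0$ and $\beta>0$ in $\F$, the ratio $\num(A)/\beta$ is a non-negative element of $\F$, and by the convention $\sh(\xi)=+\infty$ for positive infinite $\xi$ stated in Section \ref{sec-elementary}, its shadow lies in $[0,+\infty]_\R$. Trivially $\num_\beta(\emptyset)=\sh(0/\beta)=0$. For finite additivity with disjoint $A,B\in\p(\Omega)$, property (2) of elementary numerosity gives
$$\frac{\num(A\cup B)}{\beta}\ =\ \frac{\num(A)}{\beta}+\frac{\num(B)}{\beta},$$
and I would split into cases: if both ratios are finite then applying $\sh$ and using $\sh(\xi+\zeta)=\sh(\xi)+\sh(\zeta)$ from Section \ref{sec-elementary} gives the result; if at least one is infinite, then the sum is positive infinite in $\F$ (the other summand is non-negative), and the convention $x+\infty=+\infty$ on $[0,+\infty]_\R$ matches $\sh$ applied to both sides.

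For the non-atomic equivalence, I would argue as follows. If $\beta$ is infinite, then for any finite $F\subseteq\Omega$ of cardinality $n$, Proposition (3) of the preceding proposition gives $\num(F)=n$, and so $\num(F)/\beta=n/\beta$ is infinitesimal, yielding $\num_\beta(F)=0$; hence $\num_\beta$ is non-atomic. Conversely, if $\beta$ is finite, I would exhibit a finite set with nonzero $\num_\beta$-measure, namely any singleton $\{x\}$: here $\num(\{x\})/\beta=1/\beta$, and either $\beta$ is non-infinitesimal, in which case $\sh(1/\beta)=1/\sh(\beta)>0$, or $\beta$ is infinitesimal, in which case $1/\beta$ is positive infinite and $\sh(1/\beta)=+\infty$; in either case $\num_\beta(\{x\})\neq 0$, so $\num_\beta$ is not non-atomic.

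I do not anticipate a genuine obstacle; the only point requiring a little care is the bookkeeping around the value $+\infty$ in the additivity argument, which is why I would split that step explicitly by whether the ratios are finite or infinite rather than invoking $\sh$-linearity blindly.
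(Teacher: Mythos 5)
Your proof is correct and follows essentially the same route as the paper's: additivity via $\sh(\xi+\zeta)=\sh(\xi)+\sh(\zeta)$ applied to $\num(A\cup B)/\beta=\num(A)/\beta+\num(B)/\beta$, and the non-atomic equivalence via $\num_\beta(\{x\})=\sh(1/\beta)=0$ iff $\beta$ is infinite. Your extra case-splitting for the value $+\infty$ in the additivity step is a point the paper silently glosses over, so it is a welcome (if minor) refinement rather than a departure.
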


\begin{proof}
For all disjoint $A,B\subseteq\Omega$, one has:
\begin{eqnarray*}
\nonumber
\num_\beta(A\cup B) &=&
\sh\left(\frac{\num(A\cup B)}{\beta}\right)\ =\
\sh\left(\frac{\num(A)}{\beta}+\frac{\num(B)}{\beta}\right)
\\
\nonumber
{} &=&
\sh\left(\frac{\num(A)}{\beta}\right)+\sh\left(\frac{\num(B)}{\beta}\right)\ =\
\num_\beta(A)+\num_\beta(B).
\end{eqnarray*}

Notice that the measure $\num_\beta$ is non-atomic if and only if
$\num_\beta(\{x\})=\sh(1/\beta)=0$, and this
holds if and only if $\beta$ is infinite.
\end{proof}

%\medskip
%\begin{definition}
%Let us call \emph{numerosity measure} any measure that is originated
%by an elementary numerosity as above, \emph{i.e.} any
%finitely additive measure of the form $\num_\beta$.
%\end{definition}

\medskip
The above class of measures turns out to be really general.
In the next section we shall show that every finitely
additive non-atomic measure is in fact a restriction of a
suitable $\num_\beta$.

\bigskip
\section{The main result}

\begin{theorem} \label{main theorem}
Let $(\Omega,\A,\mu)$ be a non-atomic finitely additive measure.
Then there exist

\smallskip
\begin{itemize}
\item
a non-Archimedean field $\F\supseteq\R$\,;

\smallskip
\item
an elementary numerosity $\num:\p(\Omega)\to[0,+\infty)_\F$\,;
\end{itemize}

\smallskip
\noindent
such that for every positive number of the form
$\beta=\frac{\num(A^*)}{\mu(A^*)}$ one has
$$\mu(A)\ =\ \num_\beta(A)\ \text{for all }A\in\A.$$

Moreover, if $\B\subseteq\A$ is a subring whose non-empty
sets have all positive measure, then we can also assume that
$$\num(B)\ =\ \num(B')\ \ \text{for all }B,B'\in\B\ \text{such that }
\mu(B)=\mu(B').$$
\end{theorem}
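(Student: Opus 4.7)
The natural strategy is an ultrapower construction. Let the index set be
\[
I=\{\lambda=(L,\F_0,\G_0,k):L\in\pfin(\Omega),\,\F_0\in\pfin(\A),\,\G_0\in\pfin(\B),\,k\in\N\},
\]
and, for each $x\in\Omega$, $A\in\A$, $B\in\B$, $k_0\in\N$, consider the ``cofinal'' sets
\[
I_x=\{\lambda:x\in L\},\ I_A=\{\lambda:A\in\F_0\},\ I_B=\{\lambda:B\in\G_0\},\ I_{k_0}=\{\lambda:k\geq k_0\}.
\]
These have the finite intersection property, so they extend to an ultrafilter $\U$ on $I$. Set $\F:=\R^I/\U$; once real-valued approximations $\nu_\lambda$ and $\beta_\lambda$ have been built, I define $\num(S):=\langle\nu_\lambda(S)\rangle_\U$ and $\beta_0:=\langle\beta_\lambda\rangle_\U$.

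The core of the proof is the finite-level construction. For each $\lambda$ I want a non-negative finitely additive $\nu_\lambda:\p(\Omega)\to\R$ and a large $\beta_\lambda>0$ satisfying (a)~$\nu_\lambda(\{x\})=1$ for every $x\in L$, (b)~$\nu_\lambda(B)=\beta_\lambda\mu(B)$ \emph{exactly} for every $B\in\G_0$, and (c)~$|\nu_\lambda(A)/\beta_\lambda-\mu(A)|<1/k$ for every $A\in\F_0$. First I define $\nu_\lambda$ on the finite Boolean subalgebra of $\p(\Omega)$ generated by $\F_0\cup\G_0\cup\{\{x\}:x\in L\}$, whose atoms are the singletons $\{x\}$ with $x\in L$ together with cells $C\setminus L$ where $C$ runs through the atoms of $\langle\F_0\cup\G_0\rangle$. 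The naive assignment $\nu_\lambda(C\setminus L):=\max(\beta_\lambda\mu(C)-|C\cap L|,0)$ already gives (c) with error $O(|L|/\beta_\lambda)$ but fails (b), because singletons of $L$ sitting in zero-measure cells leak into the value $\nu_\lambda(B)$. Here the hypothesis on $\B$ is crucial: every non-empty atom $D_k$ of $\langle\G_0\rangle$ lies in $\B$, hence has $\mu(D_k)>0$, and therefore contains some cell $C_k^{*}$ with $\mu(C_k^{*})>0$; I absorb the full $L$-discrepancy of $D_k$ (the sum of $|C\cap L|$ over zero-measure subcells $C$ of $D_k$) by subtracting it from $\nu_\lambda(C_k^{*}\setminus L)$. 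For $\beta_\lambda$ large enough these values remain non-negative, a direct summation shows (b) now holds exactly, and the extra subtraction perturbs the error in (c) only by another $O(|L|/\beta_\lambda)$. Finally, I extend $\nu_\lambda$ from this finite subalgebra to all of $\p(\Omega)$ by putting a point-mass of the prescribed weight at an arbitrary point of each atom.

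All the required features transfer to the ultralimit. Finite additivity of $\num$ is preserved pointwise. For every $x\in\Omega$, $I_x\in\U$ gives $\num(\{x\})=1$, so $\num$ is an elementary numerosity. For every $B\in\B$, $I_B\in\U$ gives $\num(B)=\beta_0\mu(B)$ \emph{exactly} in $\F$; this already yields the ``moreover'' clause, since then $\mu(B)=\mu(B')$ forces $\num(B)=\num(B')$. For every $A\in\A$ and every $k_0\in\N$, $I_A\cap I_{k_0}\in\U$ gives $|\num(A)/\beta_0-\mu(A)|\leq 1/k_0$ in $\F$; letting $k_0$ vary, $\num(A)/\beta_0\approx\mu(A)$. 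Choosing $\beta_\lambda\geq k$ (compatible with the lower bound required for non-negativity above) makes $\beta_0$ positive infinite. For any $A^{*}\in\A$ with $\mu(A^{*})>0$, $\beta:=\num(A^{*})/\mu(A^{*})$ is $\beta_0$ up to an infinitesimal and is itself positive infinite, so $\sh(\num(A)/\beta)=\sh(\num(A)/\beta_0)=\mu(A)=\num_\beta(A)$.

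The main obstacle is the finite-level construction, specifically achieving the exact equality in (b) simultaneously with (a) and (c). The positivity of $\mu$ on non-empty sets of $\B$ is exactly what guarantees that each $\G_0$-atom contains a positive-measure cell in which the unit masses forced by points of $L$ can be absorbed without disturbing the $\B$-values.
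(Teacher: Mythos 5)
Your proof is correct, but it takes a genuinely different route from the paper's. The paper indexes its ultrapower by $\Lambda=\pfin(\Omega)$, the family of \emph{finite subsets} of $\Omega$, and approximates $\num$ by the counting functions $X\mapsto|X\cap\lambda|$; all of the work then goes into a combinatorial lemma (proved in the appendix via Dirichlet's simultaneous approximation theorem) producing finite sets $\lambda$ for which the counting ratios $|\lambda\cap A|/|\lambda\cap A'|$ approximate $\mu(A)/\mu(A')$ and for which $|\lambda\cap B|=|\lambda\cap B'|$ holds exactly whenever $\mu(B)=\mu(B')$. You instead index by finite fragments of the data (points, sets, precision) and approximate $\num$ by finitely supported \emph{real-weighted} atomic measures, which reduces the finite-level existence problem to elementary bookkeeping on the atoms of a finite Boolean algebra --- the positivity hypothesis on $\B$ being used, exactly as you say, to absorb the integer leak created by points of $L$ sitting in null cells into a positive-measure cell of each $\B$-atom. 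Your route avoids the number-theoretic input entirely and even yields the exact identity $\num(B)=\beta_0\,\mu(B)$ on all of $\B$, which is formally stronger than the ``moreover'' clause; what the paper's route buys is that $\num(X)$ is an ultralimit of genuine cardinalities, hence a nonnegative hyperinteger for every $X$, in keeping with the numerosity philosophy, though the theorem as stated does not require this. Two small points you should make explicit: assign weight $0$ to the outer atom (the complement of the union of the finitely many given sets), on which $\mu$ need not even be defined since $\A$ is only a ring; and restrict the approximation condition (c) to sets of finite measure --- the paper does the same, working with $\A_f$ throughout, so this is a shared (and harmless) limitation rather than a defect of your argument.
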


\begin{proof}
Denote by $\A_f$ (by $\B_f$) the set of all elements of $\A$
(of $\B$, respectively) which have finite measure.
Let $\Lambda=\pfin(\Omega)$ be the family of all finite subsets of $\Omega$,
and define the following sets.

\smallskip
\begin{itemize}
\item
For all $x\in\Omega$, let
$$\widehat{x}\ =\ \left\{\lambda\in\Lambda : x\in\lambda\right\}.$$

\smallskip
\item
For all $A,A'\in\A_f$ with $\mu(A')>0$ and for all $n\in\N$, let
$$\Gamma(A,A',n)\ = \ \left\{\lambda\in\Lambda :
\left|\frac{|\lambda\cap A|}{|\lambda\cap A'|} - \frac{\mu(A)}{\mu(A')}\right|\ <\
\frac{1}{n}\right\}.$$

\smallskip
\item
For all non-empty $B,B'\in\B_f$, let
$$\Theta(B,B')\ =\ \left\{\lambda\in\Lambda : |B\cap\lambda|=|B'\cap\lambda|\right\}.$$
\end{itemize}

\smallskip
Then consider the following family of subsets of $\Lambda$:
\begin{eqnarray*}
\nonumber
\G & = &
\left\{\widehat{x}\mid x\in\Omega\right\}\ \bigcup\
\left\{\Gamma(A,A',n)\mid A,A'\in\A_f,\ \mu(A')>0,\ n\in\N\right\}
\\
\nonumber
{} & {} &
 \bigcup\left\{\Theta(B,B')\mid B,B'\in\B,\ B,B'\ne\emptyset\right\}.
\end{eqnarray*}

\smallskip
We want to show that all finite intersections of elements of $\G$ are non-empty.
To this end, we shall use the following combinatorial result,
whose proof is put off to the Appendix.

\medskip
\begin{lemma}\label{combinatorial}
Let $(\Omega,\A,\mu)$ be a non-atomic finitely additive measure, and let
$\B\subseteq\A$ be a subring of subsets of $\Omega$
whose non-empty sets have all positive finite measure.
Given $m\in\N$, given finitely many points $x_1,\ldots,x_k\in\Omega$,
and given finitely many non-empty sets $A_1,\ldots,A_n\in\A$ having finite measure,
there exists a finite subset $\lambda\subset\Omega$ that satisfies the following properties:

\smallskip
\begin{enumerate}
\item
$x_1,\ldots,x_k\in\lambda$\,;

\smallskip
\item
If $A_i,A_j\in\B$ are such that $\mu(A_i)=\mu(A_j)$
then $|\lambda\cap A_i|=|\lambda\cap A_j|$\,;
	
\smallskip
\item
If $\mu(A_j)\ne 0$ then for all $i$:
$$\left|\frac{|\lambda\cap A_i|}{|\lambda\cap A_j|} -
\frac{\mu(A_i)}{\mu(A_j)}\right|\ <\ \frac{1}{m}.$$
\end{enumerate}
\end{lemma}

\medskip
\noindent
Now let finitely many elements of $\G$ be given, say
$$\widehat{x}_1,\ldots,\widehat{x}_k;\
\Gamma(A_1,A'_1,n_1),\ldots,\Gamma(A_u,A'_u,n_u);\
\Theta(B_1,B'_1),\ldots,\Theta(B_v,B'_v)\,.$$
Pick $m=\max\{n_1,\ldots,n_u\}$ and apply the above Lemma
to get the existence a finite set $\lambda\in\Lambda$ such that

\begin{enumerate}
\item
$x_1,\ldots,x_k\in\lambda$\,;

\smallskip
\item
For all $i=1,\ldots,v$, if $\mu(B_i)=\mu(B'_i)$
then $|\lambda\cap B_i|=|\lambda\cap B'_i|$\,;
	
\smallskip
\item
For all $i,j=1,\ldots, u$, if $\mu(A_j)\ne 0$ then
$$\left|\frac{|\lambda\cap A_i|}{|\lambda\cap A_j|} -
\frac{\mu(A_i)}{\mu(A_j)}\right|\ <\ \frac{1}{m}.$$
\end{enumerate}

\smallskip
Then it is readily verified that such a $\lambda$
belongs to all considered sets of $\G$.
In consequence of this \emph{finite intersection property},
the family $\G\subset\p(\Lambda)$ can be extended to an ultrafilter
$\U$ on $\Lambda$.
%$$\langle\G\rangle\ =\ \left\{Y\subseteq\Lambda\mid
%Y\supseteq X_1\cap\ldots\cap X_n\ \text{for suitable }
%X_1,\ldots,X_n\in\G\right\}.$$
%Finally, extend $\langle\G\rangle$ to an
%ultrafilter $\U$ on $\Lambda$,
Now define the following:

\smallskip
\begin{itemize}
\item
$\F=\R^\Lambda/\U$ is the ordered field obtained as the
\emph{ultrapower} of $\R$ modulo the ultrafilter $\U$.
We identify each real number $r$ with the corresponding
constant sequence $\langle c_r\rangle_\U$
defined $\U$-almost everywhere, so that $\R\subseteq\F$.

\smallskip
\item
$\num:\p(\Omega)\to[0,+\infty)_\F$ is the function where
$$\num(X)\ =\ \langle |X\cap\lambda|:\lambda\in\Lambda\rangle_\U$$
is the $\U$-equivalence class of the $\Lambda$-sequence of natural
numbers obtained by taking the number of elements
of $X$ found in every finite subset of $\Omega$.

%\smallskip
%\item
%$\beta=\frac{\num(B^*)}{\mu(B^*)}$ where $B^*\in\A$ is any
%fixed set with $\mu(B^*)>0$.\footnote
%{~Here we are assuming that the measure $\mu$ is
%not always equal to zero, otherwise the thesis is trivial.}
\end{itemize}

\smallskip
Let us now verify that all the desired properties are satisfied.
Given $A,A'\in\A_f$ with $\mu(A')\ne 0$, for every $n\in\N$ we have that
$$\left\{\lambda\in\Lambda :
\left|\frac{|\lambda\cap A|}{|\lambda\cap A'|} - \frac{\mu(A)}{\mu(A')}\right|\ <\
\frac{1}{n}\right\} =\ \Gamma(A,A',n)\ \in\ \G\ \subset\U\,,$$
and so
$$\left|\frac{\num(A)}{\num(A')} - \frac{\mu(A)}{\mu(A')}\right|\ <\ \frac{1}{n}.$$
As this holds for every $n$, we conclude that
$$\sh\left(\frac{\num(A)}{\num(A')}\right)\ =\ \frac{\mu(A)}{\mu(A')}.$$
In consequence, for every positive number $\beta\in\F$ of the form
$\frac{\num(A^*)}{\mu(A^*)}$, one has
\begin{eqnarray*}
\nonumber
\num_\beta(A) & = &
\sh\left(\frac{\num(A)}{\beta}\right)\ =\
\sh\left(\frac{\num(A)}{\num(A^*)}\cdot\mu(A^*)\right)
\\
\nonumber
{} & = & \sh\left(\frac{\num(A)}{\num(A^*)}\right)\cdot\mu(A^*)\ =\
\frac{\mu(A)}{\mu(A^*)}\cdot\mu(A^*)\ =\ \mu(A).
\end{eqnarray*}

\smallskip
As for property $(2)$, if $B,B'\in\B_f$ are non-empty sets with
$\mu(B)=\mu(B')$, then
$$\{\lambda\in\Lambda : |\lambda\cap B|=|\lambda\cap B'|\}\ =\
\Theta(B,B')\ \in\ \G\ \subset\ \U\,,$$
and hence $\num(B)=\num(B')$.
\end{proof}

%\medskip
%\begin{theorem}
%Let $\A$ be a ring of subsets of $\Omega$, and let
%$\mu:\A\to[0,+\infty]_\R$ be a pre-measure. Then there exists
%an elementary numerosity
%$$\num:\p(\Omega)\to[0,+\infty)_\F$$
%such that:
%$\num_{\beta}(A)$ is $\sigma$-additive
%over the $\sigma$-algebra generated by $\A$.
%\end{theorem}

\medskip
As a straight consequence, we obtain the following

\medskip
\begin{theorem} \label{inner measure theorem}
Let $\A$ be a ring of subsets of $\Omega$ and let
$\mu:\A\to[0,+\infty]_\R$ be a non-atomic pre-measure.
Then, along with the associated outer measure $\overline{\mu}$,
there exists an ``inner'' finitely additive measure
$$\underline{\mu}:\p(\Omega)\to[0,+\infty]_\R$$
such that:

\smallskip
\begin{enumerate}
%\item
%$\underline{\mu}(X)\le\overline{\mu}(X)$ for all $X\subseteq\Omega$,
%where $\overline{\mu}$ is the outer measure associated to $\mu$.
%namely
%$$\overline{\mu}(X)\ =\
%\inf\left\{\sum_{n=1}^\infty\mu(A_n)\,\Big|\,
%\{A_n\}_n\subseteq\A\ \&\ X\subseteq\bigcup_{n\in\N}A_n\right\}.$$
\item
There exists an elementary numerosity $\num:\p(\Omega)\to\F$
such that $\underline{\mu}=\num_\beta$
for every positive number of the form
$\beta=\frac{\num(A^*)}{\mu(A^*)}$.
%$\underline{\mu}=\num_\beta$
%for a suitable elementary numerosity $\num:\p(\Omega)\to\F$
%and for all positive numbers $\beta\in\F$ of the form
%$\beta=\frac{\num(A^*)}{\mu(A^*)}$.

\smallskip
\item
$\underline{\mu}(C)=\overline{\mu}(C)$ for all
$C\in\mathfrak{C}_\mu$, the Caratheodory $\sigma$-algebra associated to $\mu$.
In particular, $\underline{\mu}(A)=\mu(A)=\overline{\mu}(A)$ for all
$A\in\mathfrak{A}$.

\smallskip
\item
$\underline{\mu}(X)\le\overline{\mu}(X)$ for all
$X\subseteq\Omega$.
\end{enumerate}
\end{theorem}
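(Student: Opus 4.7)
The plan is to deduce Theorem~\ref{inner measure theorem} from Theorem~\ref{main theorem}, applied not to the pre-measure $\mu$ itself but to its Caratheodory extension. Let $\overline{\mu}$ be the outer measure canonically associated to $\mu$ and let $\mathfrak{C}=\mathfrak{C}_{\overline{\mu}}$ be its Caratheodory $\sigma$-algebra; recall from Section~1 that $\overline{\mu}|_{\mathfrak{C}}$ is then a complete measure extending $\mu$, that $\A\subseteq\mathfrak{C}$, and that the outer measure $\overline{\mu}$ is regular on $\p(\Omega)$. The first task is to check that this extended measure is again non-atomic in the sense of the paper (finite subsets of $\mathfrak{C}$ have measure zero), so that the hypothesis of Theorem~\ref{main theorem} is met.

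Once this is secured, I would apply Theorem~\ref{main theorem} to the non-atomic finitely additive measure $(\Omega,\mathfrak{C},\overline{\mu})$ to produce a non-Archimedean field $\F\supseteq\R$ and an elementary numerosity $\num:\p(\Omega)\to[0,+\infty)_\F$ such that $\num_\beta(C)=\overline{\mu}(C)$ for every $C\in\mathfrak{C}$ and every positive $\beta$ of the form $\num(C^*)/\overline{\mu}(C^*)$. Since $\A\subseteq\mathfrak{C}$ and $\mu$ coincides with $\overline{\mu}$ on $\A$, any $A^*\in\A$ with $0<\mu(A^*)<\infty$ gives an admissible $\beta=\num(A^*)/\mu(A^*)$. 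Then I would define $\underline{\mu}:=\num_\beta:\p(\Omega)\to[0,+\infty]_\R$. Item~(1) is immediate by construction, and item~(2) drops out of Theorem~\ref{main theorem}: $\underline{\mu}(C)=\overline{\mu}(C)$ for all $C\in\mathfrak{C}$, and in particular $\underline{\mu}(A)=\mu(A)=\overline{\mu}(A)$ for every $A\in\A$.

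For item~(3), I would invoke the regularity of $\overline{\mu}$ recalled in Section~1: given an arbitrary $X\subseteq\Omega$, choose $C\in\mathfrak{C}$ with $X\subseteq C$ and $\overline{\mu}(X)=\overline{\mu}(C)$. Since $\num$ is monotone under inclusion (a consequence of additivity together with strict positivity on non-empty sets, already noted in Section~\ref{sec-elementary}), so is $\underline{\mu}=\num_\beta$, and hence $\underline{\mu}(X)\le\underline{\mu}(C)=\overline{\mu}(C)=\overline{\mu}(X)$. The genuinely delicate step is the very first one, namely verifying that the Caratheodory completion of a non-atomic pre-measure is again non-atomic: for finite $F\in\mathfrak{C}$, subadditivity of $\overline{\mu}$ reduces the question to showing $\overline{\mu}(\{x\})=0$ for each $x$ with $\{x\}\in\mathfrak{C}$, which one should obtain by covering $\{x\}$ with $\A$-sets of arbitrarily small $\mu$-measure, exploiting both non-atomicity and the $\sigma$-additivity of $\mu$. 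Everything after this initial check is essentially bookkeeping on top of Theorem~\ref{main theorem} and Caratheodory's regularity theorem.
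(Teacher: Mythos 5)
Your proposal follows the paper's own route: apply Theorem~\ref{main theorem} to the Caratheodory restriction $\overline{\mu}_{|\mathfrak{C}_\mu}$, set $\underline{\mu}=\num_\beta$, and read off items $(1)$ and $(2)$. For item $(3)$ you invoke the regularity of $\overline{\mu}$ recalled in Section~1, whereas the paper re-derives it on the spot (for $X\subseteq\Omega$ and $\epsilon>0$ it takes a countable cover $A=\bigcup_n A_n$ with $A_n\in\A$ and $\sum_n\mu(A_n)\le\overline{\mu}(X)+\epsilon$, notes $A\in\mathfrak{C}_\mu$, and combines monotonicity of $\underline{\mu}$ with $\sigma$-subadditivity of $\overline{\mu}$); the two arguments are the same, and your shorter version is correct since regularity supplies $C\in\mathfrak{C}_\mu$ with $X\subseteq C$ and $\overline{\mu}(C)=\overline{\mu}(X)$, whence $\underline{\mu}(X)\le\underline{\mu}(C)=\overline{\mu}(C)=\overline{\mu}(X)$.

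The step you single out as delicate --- that $\overline{\mu}_{|\mathfrak{C}_\mu}$ is again non-atomic, so that Theorem~\ref{main theorem} applies --- is indeed the only real issue, and you are right to flag it: the paper applies Theorem~\ref{main theorem} to $\overline{\mu}_{|\mathfrak{C}_\mu}$ with no comment on this hypothesis. But your proposed verification does not go through as stated. Non-atomicity of $\mu$ on $\A$ gives no control on $\A$-covers of a singleton: a point need not be covered by $\A$-sets of small measure, and may fail to be covered by any $\A$-set at all, in which case $\overline{\mu}(\{x\})=+\infty$ while $\{x\}$ still lies in $\mathfrak{C}_\mu$. Concretely, take $\Omega=\R\cup\{\ast\}$ with $\A$ the ring of finite unions of bounded half-open real intervals and $\mu$ the Lebesgue pre-measure: every $Y\ni\ast$ has $\overline{\mu}(Y)=+\infty$, so the Caratheodory splitting condition for $\{\ast\}$ holds trivially, and $\overline{\mu}_{|\mathfrak{C}_\mu}$ is not non-atomic. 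In this situation the conclusion of the theorem itself fails, since $\num_\beta$ assigns the common value $\sh(1/\beta)$ to all singletons and therefore cannot agree with $\overline{\mu}$ on both $\{\ast\}$ and the null set $\{0\}$. So the gap is not peculiar to your write-up; it is an implicit hypothesis of the statement (for instance, that every point of $\Omega$ is contained in $\A$-sets of arbitrarily small finite measure, as holds in both applications of Section~4). Under such a hypothesis your covering argument does work, and the remainder of your proof is correct and coincides with the paper's.
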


\begin{proof}
By \emph{Caratheodory extension theorem}, the restriction
of $\overline{\mu}$ to $\mathfrak{C}_\mu$ is a measure
that agrees with $\mu$ on $\A$.
By applying the previous theorem to $\overline{\mu}_{|\mathfrak{C}_\mu}$,
we obtain the existence of
an elementary numerosity
$\num:\p(\Omega)\to[0,+\infty)_\F$
such that for every positive number of the form
$\beta=\frac{\num(A^*)}{\mu(A^*)}$ one has
$\num_\beta(C)=\overline{\mu}(C)$ for all
$C\in\mathfrak{C}_\mu$. We claim that
$\underline{\mu}=\num_\beta:\p(\Omega)\to[0,+\infty]_\R$
is the desired ``inner" finitely additive measure.

\smallskip
Properties $(1)$ and $(2)$ are trivially satisfied by our definition
of $\underline{\mu}$, so we are left to show $(3)$.
%Let us now show that
%$\underline{\mu}(X) \leq \overline{\mu}(X)$ for all $X \in \p(\Omega)$.
For every $X\subseteq\Omega$, by definition of outer measure we have that
for every $\epsilon>0$
there exists a countable union $A=\bigcup_{n=1}^\infty A_n$
of sets $A_n\in\A$ such that $A\supseteq X$ and
$\sum_{n=1}^\infty\mu(A_n)\le\overline{\mu}(X)+\epsilon$.
Notice that $A$ belongs to the $\sigma$-algebra generated by $\A$,
and hence $A\in\mathfrak{C}_\mu$. In consequence,
$\underline{\mu}(A)=\num_\beta(A)=\overline{\mu}(A)$.
Finally, by monotonicity of the finitely additive measure $\underline{\mu}$,
and by $\sigma$-subadditivity of the outer measure $\overline{\mu}$, we obtain:
$$\underline{\mu}(X)\ \le\ \underline{\mu}(A)\ =\ \overline{\mu}(A)\ \le\
\sum_{n=1}^\infty\overline{\mu}(A_n)\ =\ \sum_{n=1}^\infty\mu(A_n)\ \le\
\overline{\mu}(X)+\epsilon.$$
As $\epsilon>0$ is arbitrary, the desired inequality
$\underline{\mu}(X)\le\overline{\mu}(X)$ follows.
\end{proof}

\medskip
It seems of some interest to investigate the properties of the
estension of the Caratheodory algebra given by family of all sets
for which the outer measure coincides with the above ``inner measure":
$$\mathfrak{C}(\num_\beta)\ =\ \left\{X\subseteq\Omega\mid\underline{\mu}(X)=
\overline{\mu}(X)\right\}.$$
Clearly, the properties of $\mathfrak{C}(\num_\beta)$
may depend on the choice of the
elementary numerosity $\num_\beta$.

\smallskip
Theorem \ref{inner measure theorem} ensures that
the inclusion $\C_\mu \subseteq \C(\num_\beta)$ always holds.
Moreover, this inclusion is an equality if and only if all $X \not \in \C_\mu$ 
satisfy the inequality $\underline{\mu}(X) < \overline{\mu}(X)$.
It turns out that this property is equivalent to a number of other 
statements.

\medskip
\begin{proposition}
The following are equivalent:

\smallskip
\begin{enumerate}
\item 
$X\not\in \C_\mu \Rightarrow \underline{\mu}(X) < \overline{\mu}(X)$ 
and $\underline{\mu}(X^c) < \overline{\mu}(X^c)$.

\smallskip
\item
$\underline{\mu}(X)=\overline{\mu}(X)\Longleftrightarrow\underline{\mu}(X^c)=
\overline{\mu}(X^c)$.

\smallskip
\item 
$\underline{\mu}(X) = 0 \Longleftrightarrow \overline{\mu}(X) = 0$.
\end{enumerate}
\end{proposition}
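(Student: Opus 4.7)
My plan is to establish the equivalence via the cycle $(1)\Rightarrow(2)\Rightarrow(3)\Rightarrow(1)$, drawing on three ingredients already available from Theorem \ref{inner measure theorem} and its background: the inclusion $\C_\mu\subseteq\C(\num_\beta)$, the finite additivity of $\underline{\mu}$ on all of $\p(\Omega)$ (which, since $\Omega\in\C_\mu$, gives $\underline{\mu}(X)+\underline{\mu}(X^c)=\underline{\mu}(\Omega)=\overline{\mu}(\Omega)$), and the regularity of $\overline{\mu}$ coupled with the completeness of its restriction to $\C_\mu$ furnished by Caratheodory's theorem.

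For $(1)\Rightarrow(2)$ I would argue by contrapositive: the contrapositive of $(1)$ says that whenever at least one of $\underline{\mu}(X)=\overline{\mu}(X)$ or $\underline{\mu}(X^c)=\overline{\mu}(X^c)$ holds, $X\in\C_\mu$; then the inclusion $\C_\mu\subseteq\C(\num_\beta)$ applied to both $X$ and $X^c$ produces the symmetric equality. For $(2)\Rightarrow(3)$, only the direction $\underline{\mu}(X)=0\Rightarrow\overline{\mu}(X)=0$ needs work, as the converse is part of Theorem \ref{inner measure theorem}(3). Assuming $\underline{\mu}(X)=0$, additivity forces $\underline{\mu}(X^c)=\overline{\mu}(\Omega)$, while $\overline{\mu}(X^c)\le\overline{\mu}(\Omega)$ sandwiched with the standing inequality $\underline{\mu}\le\overline{\mu}$ yields $\underline{\mu}(X^c)=\overline{\mu}(X^c)$; hypothesis $(2)$ then transports this equality back to $X$, giving $\overline{\mu}(X)=\underline{\mu}(X)=0$. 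The case $\overline{\mu}(\Omega)=+\infty$ is handled uniformly by the same sandwich, now with both sides equal to $+\infty$.

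The main step, and what I expect to be the main obstacle, is $(3)\Rightarrow(1)$. Assume $X\notin\C_\mu$ and, for contradiction, suppose $\underline{\mu}(X)=\overline{\mu}(X)$. By regularity of $\overline{\mu}$, choose $C\in\C_\mu$ with $X\subseteq C$ and $\overline{\mu}(C)=\overline{\mu}(X)$. When this common value is finite, additivity of $\underline{\mu}$ gives $\underline{\mu}(C\setminus X)=\underline{\mu}(C)-\underline{\mu}(X)=\overline{\mu}(C)-\overline{\mu}(X)=0$; hypothesis $(3)$ then forces $\overline{\mu}(C\setminus X)=0$, and completeness of $\overline{\mu}$ on $\C_\mu$ places $C\setminus X$, and hence $X=C\setminus(C\setminus X)$, inside $\C_\mu$, contradicting $X\notin\C_\mu$. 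The same argument applied to $X^c$ (which also lies outside $\C_\mu$) delivers the second strict inequality. The genuine subtlety is the case $\overline{\mu}(X)=+\infty$, where the subtraction $\underline{\mu}(C)-\underline{\mu}(X)$ is not directly legitimate; I would circumvent it either by intersecting with a $\sigma$-finite exhaustion of $\Omega$ by sets of $\A$ and reducing to the finite case, or by testing the Caratheodory condition for $X$ against covers of finite outer measure drawn from the definition of $\overline{\mu}$.
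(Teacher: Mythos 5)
Your proposal is correct and follows essentially the same route as the paper: the same cycle $(1)\Rightarrow(2)\Rightarrow(3)\Rightarrow(1)$, with $(1)\Rightarrow(2)$ via the inclusion $\C_\mu\subseteq\C(\num_\beta)$, $(2)\Rightarrow(3)$ via the complement-and-sandwich argument, and $(3)\Rightarrow(1)$ via a measurable cover $C\supseteq X$ of equal outer measure, where your appeal to completeness of $\overline{\mu}$ on $\C_\mu$ is exactly the content of the propositions of Yeh that the paper cites to get $\overline{\mu}(C\setminus X)>0$. The one point where you go beyond the paper is in flagging the case $\overline{\mu}(X)=+\infty$, where the subtraction $\underline{\mu}(C)-\underline{\mu}(X)$ is illegitimate; the paper silently performs that subtraction, so your proposed reduction to the finite case is a welcome (if only sketched) repair rather than a deviation.
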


\begin{proof}
$(1)\Rightarrow(2)$.
Suppose towards a contradiction that $(1)$ holds but $(2)$ is false.
The latter hypothesis ensures the existence of a set $X \subseteq \R$ 
such that $\underline{\mu}(X)=\overline{\mu}(X)$ and 
$\underline{\mu}(X^c)<\overline{\mu}(X^c)$.
Thanks to Theorem \ref{inner measure theorem}, we deduce that 
$X\not\in\C_\mu$; at this point, by $(1)$ we get the contradiction 
$\underline{\mu}(X)<\overline{\mu}(X)$.

\smallskip
$(2)\Rightarrow(3)$.
The implication $\overline{\mu}(X)=0\Rightarrow\underline{\mu}(X)=0$ 
is always true.
On the other hand, if $\underline{\mu}(X)=0$, then 
$\underline{\mu}(X^c)=\underline{\mu}(\Omega)=\overline{\mu}(\Omega)$.
By the inequality $\underline{\mu}(X^c)\leq\overline{\mu}(X^c)$, 
we deduce $\overline{\mu}(X^c)=\overline{\mu}(\Omega)=\underline{\mu}(X^c)$ and, 
thanks to $(2)$, also $\overline{\mu}(X)=0$ follows.

\smallskip
$(3)\Rightarrow(1)$.
Suppose towards a contradiction that $(3)$ holds but $(1)$ is false.
The latter hypothesis ensures the existence of a set $X\not\in\C_\mu$ 
satisfying $\underline{\mu}(X)=\overline{\mu}(X)$ and 
$\underline{\mu}(X^c)<\overline{\mu}(X^c)$.
Thanks to Propositions 20.9 and 20.11 of \cite{Yeh},
we can find a set $A\in\mathfrak{C}_\mu$ satisfying 
$A\supset X$, $\overline{\mu}(A)=\overline{\mu}(X)$ and 
$\overline{\mu}(A\setminus X)> 0$.
From the hypothesis $\underline{\mu}(X)=\overline{\mu}(X)$ we 
obtain the following equalities:
$$\underline{\mu}(X)\ =\ \overline{\mu}(X)\ =\ 
\overline{\mu}(A)\ =\ \underline{\mu}(A)$$
which imply $\underline{\mu}(A\setminus X)=0$.
Finally, by the hypothesis $(3)$, we obtain the contradiction 
$\overline{\mu}(A\setminus X)=0$.
\end{proof}

\bigskip
\section{Applications}

In this last section, we present two consequences
of Theorems \ref{main theorem} and \ref{inner measure theorem} that
may have some relevance in applications.

\medskip
\subsection{Elementary numerosities and Lebesgue measure}

The first application that we show is about the existence of
an elementary numerosity which is consistent with Lebesgue measure.
%namely a finitely additive measure that extends Lebesgue
%measure to all sets of reals by maintaining several nice features

\medskip
\begin{corollary}
Let $(\R,\mathfrak{L},\mu_L)$ be the \emph{Lebesgue measure} over $\R$.
Then there exists an elementary numerosity
$\num:\p(\R)\rightarrow\F$ such that:

\smallskip
\begin{enumerate}
\item
$\num([x,x+a))=\num([y,y+a))$ for all $x,y\in\R$ and for all $a>0$.

\smallskip
\item
$\num([x,x+a))=a\cdot\num([0,1))$ for all rational numbers $a>0$.

%\smallskip
%\item
%If $\beta=\num([0,1))$, then:

\smallskip
\item
$\sh\left(\frac{\num(X)}{\num([0,1))}\right)=\mu_L(X)$ for all $X\in\mathfrak{L}$.

\smallskip
\item
$\sh\left(\frac{\num(X)}{\num([0,1))}\right)\le
\overline{\mu}_L(X)$ for all $X\subseteq\R$.
%where $\overline{\mu}_L$ is the Lebesgue outer measure.
\end{enumerate}
\end{corollary}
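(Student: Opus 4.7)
The plan is to obtain $\num$ by a single invocation of Theorem \ref{inner measure theorem} applied to the Lebesgue pre-measure, making a careful choice of the distinguished subring $\B$ inside the underlying application of Theorem \ref{main theorem} so as to force (1) and (2). Concretely, I would let $\A_0$ be the ring of elementary sets (finite disjoint unions of half-open intervals $[a,b)\subset\R$ of finite total length) and let $\mu_L^0$ be the standard length pre-measure on $\A_0$. Its Carath\'eodory extension is $(\R,\mathfrak{L},\mu_L)$ with outer measure $\overline{\mu}_L$, and $\mu_L^0$ is non-atomic, so Theorem \ref{inner measure theorem} applies. Retracing its proof, one invokes Theorem \ref{main theorem} on $(\R,\mathfrak{L},\mu_L)$, and at that step I would insert $\B:=\A_0$; every non-empty element of $\A_0$ is a finite disjoint union of non-degenerate half-open intervals and therefore has positive finite Lebesgue measure, so the hypothesis of the ``moreover'' clause is satisfied.

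Setting $\beta:=\num([0,1))$ (which is of the prescribed form $\num(A^*)/\mu_L(A^*)$ with $A^*=[0,1)$), property (1) is then immediate: $[x,x+a)$ and $[y,y+a)$ both lie in $\B$ and share the common Lebesgue measure $a$, so $\num([x,x+a))=\num([y,y+a))$. For property (2), I would run a short finite-additivity computation inside $\F$: for $a=p/q$ with $p,q\in\N$, the disjoint decompositions $[0,1)=\bigsqcup_{i=0}^{q-1}[i/q,(i+1)/q)$ and $[0,p/q)=\bigsqcup_{j=0}^{p-1}[j/q,(j+1)/q)$ consist of translates of $[0,1/q)$, all of which have a common numerosity by (1). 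Finite additivity then yields $\num([0,1))=q\cdot\num([0,1/q))$ and $\num([0,p/q))=p\cdot\num([0,1/q))$, hence $\num([0,p/q))=(p/q)\,\num([0,1))$ in $\F$; combining with (1) extends this to any starting point $x$.

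Properties (3) and (4) come directly from Theorem \ref{inner measure theorem}: its part (2) gives $\num_\beta(X)=\mu_L(X)$ for every $X\in\mathfrak{L}$ (using $\sh(\xi)=+\infty$ for infinite positive $\xi$ when $\mu_L(X)=+\infty$), while its part (3) gives $\num_\beta(X)\le\overline{\mu}_L(X)$ for all $X\subseteq\R$. The only subtle point, and what I expect to be the main obstacle, is that the ``moreover'' clause of Theorem \ref{main theorem} is not lifted explicitly into the statement of Theorem \ref{inner measure theorem}; however, since the proof of the latter explicitly invokes the former, one simply needs to verify that retracing that proof with $\B:=\A_0$ introduces no new obstruction --- which it does not, since $\A_0$ consists entirely of sets of positive finite Lebesgue measure.
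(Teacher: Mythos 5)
Your proposal is correct and follows essentially the same route as the paper: the authors also take $\B$ to be the subring generated by the half-open intervals (all of whose non-empty members have positive finite measure), invoke Theorems \ref{main theorem} and \ref{inner measure theorem} together with $\beta=\num([0,1))$, and prove $(2)$ by the identical $p/q$ additivity computation. The ``subtle point'' you flag about the moreover clause not appearing in the statement of Theorem \ref{inner measure theorem} is handled in the paper exactly as you suggest, by combining the two theorems in a single application.
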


\begin{proof}
Notice that the family of half-open intervals
$$\ring{I}\ =\ \left\{[x,x+a)\mid x\in\R \ \&\ a>0\right\}$$
generates a subring $\B\subset\ring{L}$ whose non-empty sets have
all positive measure.
Then by combining Theorems \ref{main theorem} and \ref{inner measure theorem}
we obtain the existence of an elementary numerosity $\num:\p(\R)\to\F$
such that for every positive number of the form
$\beta=\frac{\num(A^*)}{\mu(A^*)}$ one has:

\smallskip
\begin{enumerate}
\item[$(i)$]
$\num(X)=\num(Y)$ whenever $X,Y\in\B$ are such that
$\mu_L(X)=\mu_L(Y)$\,;

\smallskip
\item[$(ii)$]
$\num_\beta(X)=\mu_L(X)$ for all $X\in\mathfrak{L}$\,;

\smallskip
\item[$(iii)$]
$\num_\beta(X)\le\overline{\mu}_L(X)$ for all $X\subseteq\R$.
\end{enumerate}

\smallskip
Since $[x,x+a)\in\B$ for all $x\in\R$ and for all $a>0$, property $(1)$
directly follows from $(i)$. In order to prove $(2)$,
it is enough to show that $\num([0,a))=a\cdot\num([0,1)$ for all positive
$a\in\Q$. Given $p,q\in\N$, by $(1)$ and additivity we have that
$$\num\left(\left[0,\frac{p}{q}\right)\right)\ =\
\num\left(\bigsqcup_{i=0}^{p-1}\left[\frac{i}{q},\frac{i+1}{q}\right)\right)\ =\
\sum_{i=0}^{p-1}\num\left(\left[\frac{i}{q},\frac{i+1}{q}\right)\right)\ =\
p\,\cdot\num\left(\left[0,\frac{1}{q}\right)\right).$$
In particular, for $p=q$ we get that $\num([0,1))=q\cdot\num([0,1/q))$,
and hence property $(2)$ follows:
$$\num\left(\left[0,\frac{p}{q}\right)\right)\ =\
\frac{p}{q}\cdot\num\left([0,1)\right).$$

Finally, if we take as
$$\beta\ =\ \frac{\num([0,1))}{\mu_L([0,1))}\ =\ \num([0,1)),$$
then $(ii)$ and $(iii)$ correspond to properties
$(3)$ and $(4)$, respectively.
\end{proof}

\medskip
\begin{remark}
Notice that every \emph{non}-Lebesgue measurable set $X$ such that
$$\num_\beta(X)\ =\ \sh\left(\frac{\num(X)}{\num([0,1)}\right)\ =\
\overline{\mu}_L(X)$$
necessarily has translates $t+X$ with a different $\num_\beta$-measure:
$\num_\beta(t+X)\ne\num_\beta(X)$.
In fact, recall that Lebesgue measure $\mu_L$
is characterized as the unique translation-invariant measure
on the Borel subsets of $\R$ such that $\mu_L([0,1))=1$.\footnote
{~The family of Borel sets of a topological space
is the $\sigma$-algebra
generated by the open subsets.}
\end{remark}

\medskip
\subsection{Elementary numerosities and probability of infinite coin tosses}

The second application of our results on elementary numerosities
is about the existence of a non-Archimedean probability for infinite
sequences of coin tosses, which we propose as a sound mathematical model
for Laplace's original ideas. Recall the \emph{Kolmogorovian framework}:

\smallskip
\begin{itemize}
\item
The \emph{sample space}
$$\Omega\ =\ \{H,T\}^\N\ =\ \left\{\omega\mid\omega:\N\to\{H,T\}\right\}$$
is the set of sequences which take either $H$ (``head") or $T$ (``tail") as values.
%If $\omega\in\Omega$, we shall denote $\omega=(\omega_1,\ldots,\omega_n,\ldots)$
%where $\omega_i=\omega(i)\in\{H,T\}$ is the value taken by $\omega$ on $i$.

\smallskip
\item
A \emph{cylinder set} of codimension $n$ is a set of the form:\footnote
{~We agree that $i_1<\ldots<i_n$.}
$$C_{(t_1,\ldots,t_n)}^{(i_1,\ldots,i_n)}\ =\
\left\{\omega\in\Omega\mid\omega(i_s)=t_s\ \text{ for }s=1,\ldots,n\right\}$$
\end{itemize}

\smallskip
From the probabilistic point of view, the cylinder set
$C_{(t_1,\ldots,t_n)}^{(i_1,\ldots,i_n)}$ represents the event that
for all $s=1,\ldots,n$, the $i_s$-th coin toss gives $t_s$ as outcome.
Notice that the family $\ring{C}$ of all cylinder sets
is a ring of sets over $\Omega$.

\smallskip
\begin{itemize}
\item
The function $\mu_C:\ring{C}\to[0,1]$ is defined by setting:
$$\mu_C\left(C_{(t_1,\ldots,t_n)}^{(i_1,\ldots,i_n)}\right)\ = \ 2^{-n}.$$
\end{itemize}

\smallskip
It is shown that $\mu_C$ is a pre-measure of probability on the ring $\ring{C}$.

\smallskip
\begin{itemize}
\item
$\A$ is the $\sigma$-algebra generated by the ring of
cylinder sets $\ring{C}$\,;

\smallskip
\item
$\mu:\A\to[0,1]$ is the unique probability measure that extends $\mu_C$,
as guaranteed by Caratheodory extension theorem.
\end{itemize}

\smallskip
The triple $(\Omega,\A,\mu)$ is named the \emph{Kolmogorovian probability}
for \emph{infinite sequences of coin tosses}.

\smallskip
In \cite{nap}, it is proved the existence of an elementary
numerosity $\num:\p(\Omega)\to\mathbb{F}$ which is coherent
with the pre-measure $\mu_C$. Namely,
by considering the ratio $P(E)=\num(E)/\num(\Omega)$
between the numerosity of the given event $E$ and
the numerosity of the whole space $\Omega$, then
one obtains a \emph{non-Archimedean} finitely additive probability
$P:\p(\Omega)\to[0,1]_\F$ that satisfies the following properties:

\smallskip
\begin{enumerate}
\item
If $F\subset\Omega$ is finite, then for all $E\subseteq\Omega$,
the conditional probability
$$P(E|F)\ =\ \frac{|E\cap F|}{|F|}.$$
\item
$P$ agrees with $\mu_C$ over all cylindrical sets:
$$P\left(C_{(t_1,\ldots,t_n)}^{(i_1,\ldots,i_n)}\right)\ =\
\mu_C\left(C_{(t_1,\ldots,t_n)}^{(i_1,\ldots,i_n)}\right)\ = \ 2^{-n}.$$
\end{enumerate}

\medskip
%By applying Theorem \ref{inner measure theorem},
We are now able
to refine this result by showing that, up to infinitesimals,
we can take $P$ to agree with $\mu$ on the whole
$\sigma$-algebra $\A$.

\medskip
\begin{corollary}
Let $(\Omega,\A,\mu)$ be the Kolmogorovian probability
for infinite coin tosses. There exists an elementary numerosity
$\num:\p(\Omega)\rightarrow\F$ such that the corresponding
non-Archimedean probability $P(E)=\num(E)/\num(\Omega)$
satisfies the above properties $(1)$ and $(2)$, along with the
additional condition:

\smallskip
\begin{enumerate}
\item[(3)]
$\sh(P(E))=\mu(E)$ for all $E\in\A$.
\end{enumerate}
\end{corollary}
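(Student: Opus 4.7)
The plan is to apply Theorem \ref{main theorem} directly to the Kolmogorovian probability $(\Omega,\A,\mu)$, taking as the distinguished subring the collection $\B = \ring{C}$ of cylinder sets. First I would check the hypotheses. The measure $\mu$ is non-atomic: for any $\omega \in \Omega$, the singleton $\{\omega\}$ is the countable intersection of the cylinders $C^{(1,\ldots,n)}_{(\omega(1),\ldots,\omega(n))}$, so $\{\omega\} \in \A$ and $\mu(\{\omega\}) \le 2^{-n}$ for every $n$, whence $\mu(\{\omega\}) = 0$; finite subsets of $\Omega$ then have measure zero by additivity. Moreover, every non-empty cylinder has positive $\mu$-measure $2^{-n}$, so the subring $\B$ satisfies the additional hypothesis of Theorem \ref{main theorem}.

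Applying the theorem yields an elementary numerosity $\num : \p(\Omega) \to [0,+\infty)_\F$ such that, setting $\beta = \num(\Omega)/\mu(\Omega) = \num(\Omega)$, we have $\mu(A) = \sh(\num(A)/\beta)$ for every $A \in \A$, and $\num(B) = \num(B')$ whenever $B, B' \in \B$ satisfy $\mu(B) = \mu(B')$. Defining $P(E) = \num(E)/\num(\Omega)$, property $(3)$ is then immediate, since $\sh(P(E)) = \sh(\num(E)/\beta) = \mu(E)$. Property $(1)$ follows from the fact, proved in Section \ref{sec-elementary}, that $\num(G) = |G|$ for every finite $G$: for a finite $F$ and any event $E$,
$$P(E\mid F)\ =\ \frac{P(E\cap F)}{P(F)}\ =\ \frac{\num(E\cap F)}{\num(F)}\ =\ \frac{|E\cap F|}{|F|}.$$

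The delicate point is property $(2)$, which demands exact equality $P(C) = 2^{-n}$, not merely agreement up to infinitesimals as would come from $(3)$ alone. This is where the moreover clause of Theorem \ref{main theorem} does the work. For each fixed $n \ge 1$, the $2^n$ cylinders $C^{(1,\ldots,n)}_{(t_1,\ldots,t_n)}$ with $t_i \in \{H,T\}$ are pairwise disjoint, partition $\Omega$, and all lie in $\B$ with common $\mu$-measure $2^{-n}$; hence by the moreover clause they share a single numerosity $a_n$, and finite additivity forces $2^n a_n = \num(\Omega)$, so $a_n = \num(\Omega)/2^n$. Since any cylinder of codimension $n$ also has $\mu$-measure $2^{-n}$ and lies in $\B$, the same clause assigns it numerosity $a_n$, yielding $P(C) = a_n/\num(\Omega) = 2^{-n} = \mu_C(C)$. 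The main obstacle is precisely this upgrade from infinitesimal agreement to strict equality on cylinders, and it is resolved cleanly by invoking the subring refinement of the main theorem rather than its bare statement.
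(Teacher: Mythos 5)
Your proposal is correct and follows essentially the same route as the paper: apply the main theorem with the ring of cylinder sets as the distinguished subring $\B$, take $\beta=\num(\Omega)$, get $(1)$ from finite sets having numerosity equal to cardinality, and get the exact (not just up-to-infinitesimal) equality in $(2)$ from the ``moreover'' clause plus finite additivity over the $2^n$ disjoint cylinders partitioning $\Omega$. Your explicit check of non-atomicity and the transfer from indices $(1,\ldots,n)$ to arbitrary indices are minor elaborations of details the paper leaves implicit.
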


\begin{proof}
Recall that the family $\mathfrak{C}\subset\mathfrak{A}$ of
cylinder sets is a ring whose non-empty sets
have all positive measure. So, by applying Theorems
\ref{main theorem} and \ref{inner measure theorem},
we obtain an elementary numerosity $\num:\p(\Omega)\to\F$
such that for every positive number of the form
$\beta=\frac{\num(A^*)}{\mu(A^*)}$ one has:

\smallskip
\begin{enumerate}
\item[$(i)$]
$\num(C)=\num(C')$ whenever $C,C'\in\mathfrak{C}$ are
such that $\mu(C)=\mu(C')$\,;

\smallskip
\item[$(ii)$]
$\num_\beta(E)=\mu(E)$ for all $E\in\A$.

%\smallskip
%\item[$(iii)$]
%$\num_\beta(X)\le\overline{\mu}_L(X)$ for all $X\subseteq\R$.
\end{enumerate}

\smallskip
Property $(1)$ trivially follows by recalling that elementary
numerosities of finite sets agree with cardinality:
$$P(E|F)\ =\ \frac{P(E\cap F)}{P(F)}\ =\
\frac{\frac{\num(E\cap F)}{\num(\Omega)}}{\frac{\num(F)}{\num(\Omega)}}\ =\
\frac{\num(E\cap F)}{\num(F)}\ =\ \frac{|E\cap F|}{|F|}.$$

Let us now turn to condition $(2)$.
Notice that for any fixed $n$-tuple of indices $(i_1,\ldots,i_n)$:

\begin{itemize}
\item
There are exactly $2^n$-many different $n$-tuples $(t_1, \ldots, t_n)$
of heads and tails;
\item
The associated cylinder sets $C_{(t_1,\ldots,t_n)}^{(i_1,\ldots,i_n)}$
are pairwise disjoint and their union equals the whole sample space $\Omega$.
\end{itemize}

By $(i)$, all those cylinder sets of codimension $n$
have the same numerosity
$\eta=\num\left(C_{(t_1,\ldots,t_n)}^{(i_1,\ldots,i_n)}\right)$
and so, by additivity, it must be $\num(\Omega)=2^n \cdot \eta$.
We conclude that
$$P\left(C_{(t_1, \ldots, t_k)}^{(i_1, \ldots, i_n)}\right)\ =\
\frac{\num\left(C_{(t_1, \ldots, t_k)}^{(i_1, \ldots, i_n)}\right)}
{\num(\Omega)}\ =\ \frac{\eta}{2^n\cdot\eta}\ =\ 2^{-n}.$$

We are left to prove $(3)$. By taking as
$\beta=\frac{\num(\Omega)}{\mu(\Omega)}=\num(\Omega)$,
property $(ii)$ ensures that for every $E\in\A$:
$$\mu(E)\ =\ \num_\beta(E)\ =\
\sh\left(\frac{\num(E)}{\beta}\right)\ =\
\sh\left(\frac{\num(E)}{\num(\Omega)}\right)\ =\
\sh(P(E)).$$
\end{proof}

\bigskip
\appendix
\section{Proof of Lemma \ref{combinatorial}}

Without loss of generality, we can assume that the given sets $A_i$ are
arranged in such a way that $A_1,\ldots,A_l\in\B$ and
$A_{l+1},\ldots,A_n\in\A\setminus\B$ for a suitable $l$.
It will be convenient in the sequel that the
considered elements in $\B$ be pairwise disjoint. To this end,
consider the partition $\{B_1,\ldots,B_h\}$ induced by $\{A_1,\ldots,A_l\}$, namely
$A_1\cup\ldots\cup A_l=B_1\sqcup\ldots\sqcup B_h$.\footnote
{~Recall that the \emph{partition induced} by a finite family $\{A_1,\ldots,A_n\}$ is
the partition on $A_1\cup\ldots\cup A_n$ given by the non-empty
intersections $\bigcap_{i=1}^n A_i^{\chi(i)}$ for
$\chi:\{1,\ldots,n\}\to\{-1,1\}$,
where $A_j^{1}=A_j$ and $A_j^{-1}=(\bigcup_{i=1}^n A_i)\setminus A_j$.}
(Notice that,
by the ring properties of $\B$, every piece $B_s$ belongs to $\B$.)
Finally, let
$$\bigcup_{i=1}^n A_i\ =\ C_1\sqcup\ldots\sqcup C_p \sqcup D_1\sqcup\ldots\sqcup D_q$$
be the partition induced by $\{B_1,\ldots,B_h,A_{l+1},\ldots,A_n \}$,
where $\mu(C_s)>0$ for $s=1,\ldots,p$ and $\mu(D_t)=0$ for $t=1,\ldots,q$.
For every $s=1,\ldots,h$, the set $B_s$ include at least one
piece $C_j$ of positive measure in the above partition.
Moreover, since $B_1,\ldots,B_h$ are pairwise disjoint, by re-arranging if necessary,
we can also assume that $C_s\subseteq B_s$ for $s=1,\ldots,h$.

\smallskip
Now recall the following \emph{Dirichlet's simultaneous approximation theorem}
(see \emph{e.g.} \cite{hw} \S 11.12):
``Given finitely many real numbers $y_s>0$, for every $\varepsilon>0$
there exist arbitrarily large numbers $N \in \N$ such that every
fractional part $\{N\cdot y_s\}=N\cdot y_s -[N\cdot y_s]<\varepsilon$''.
So, if we let

\smallskip
\begin{itemize}
\item
$\alpha=\mu\left(\bigcup_{i=1}^n A_i\right)$

\smallskip
\item
$c=\min\{\mu(C_s)\mid s=1,\ldots, p\}$
\end{itemize}

\noindent
then we can pick a natural number $N$ such that:

\smallskip
\begin{enumerate}
\item[$(a)$]
$N> \frac{\alpha\,(2m+1)\,(k+1)}{c^2}$\,;

%\item[(b)]
%$N\geq \frac{m\cdot(k+1)}{\mu(A_j)}$ for all $j$ such that $\mu(A_j)>0$\,;

\smallskip
\item[$(b)$]
$e_s=\{N\cdot\mu(C_s)\}<\frac{1}{p}$ for all $s=1,\ldots,p$\,.
\end{enumerate}

\smallskip
Denote by

\smallskip
\begin{itemize}
\item
$C=\bigsqcup_{s=1}^p C_s$ the ``positive part" of the partition\,;

\smallskip
\item
$D=\bigsqcup_{t=1}^q D_t$ the ``negligible part" of the partition\,;

\smallskip
\item
$F=\{x_1,\ldots,x_k \}$.
\end{itemize}

\smallskip
Then, set

\smallskip
\begin{itemize}
\item
$N_s=[N\cdot\mu(C_s)]$ for $s=1,\ldots,p$\,;

\smallskip
\item
$M_s=|B_s\cap D\cap F|$ for $s=1,\ldots,h$\,.
\end{itemize}

\smallskip
Notice that $N_s>k$ for all $s$. In fact,
by the above conditions $(a)$ and $(b)$:
\begin{eqnarray*}
\nonumber
N_s & = & N\cdot\mu(C_s)-e_s\  >\
\frac{\alpha\,(2m+1)\,(k+1)}{c^2}\cdot\mu(C_s)-e_s
\\
\nonumber
{} & > &
\frac{\alpha\cdot\mu(C_s)}{c^2}\cdot(k+1) - e_s\ >\
1\cdot(k+1)-1\ = \ k.
\end{eqnarray*}

\smallskip
For $s=1,\ldots,h$, pick a finite subset $\lambda_s\subset C_s$ containing exactly
$(N_s - M_s)$-many elements, and such that $C_s\cap F\subseteq\lambda_s$.
Observe that this is in fact possible because
$$|C_s\cap F|\ \le\ |B_s\cap C\cap F|\ =\ |B_s\cap F|-M_s\ \le\ k-M_s\ <\ N_s-M_s.$$
%$$M_s\ =\ |B_s\cap F| - |B_s\cap C\cap F|\ \leq\
%k - |C_s\cap F|,$$
%from which we deduce $|C_s\cap F|\leq k-M_s\leq N_s - M_s$.
For $s = h+1, \ldots, p$, pick a finite subset
$\lambda_s \subset C_s$ containing exactly $N_s$-many elements.
Finally, define
$$\lambda\ =\ F\cup\bigcup_{s=1}^p \lambda_s.$$

We claim that $\lambda$ has the desired properties.
Condition $(1)$ is trivially is satisfied because $F\subseteq\lambda$ by definition.
For every $i=1,\ldots,n$ let:
$$G(i)\ =\ \{s\leq h\mid C_s\subseteq A_i\}\quad\text{and}\quad
G'(i)\ =\ \{s>h\mid C_s\subseteq A_i\}.$$

With the above definitions, we obtain:
\begin{eqnarray*}
|\lambda\cap A_i| & = &
\sum_{s\in G(i)}|\lambda_s| + \sum_{s\in G'(i)}|\lambda_s|\ +\ |A_i\cap D\cap F|
\\
& = & \sum_{s\in G(i)}(N_s - M_s)\ +
\sum_{s\in G'(i)} N_s\ + \ |A_i\cap D\cap F|
\\
& = & \sum_{s\in G(i)\cup G'(i)}\!\!\!\!\!N_s\ - \sum_{s\in G(i)} M_s\ +\ |A_i\cap D\cap F|
\\
& = & N\cdot\left(\sum_{s\in G(i)\cup G'(i)}\!\!\!\mu(C_s)\right) -
\varepsilon_i - \eta_i + \vartheta_i
\\
& = & N\cdot\mu(A_i) - \varepsilon_i - \eta_i + \vartheta_i
\end{eqnarray*}

where:

\smallskip
\begin{itemize}
\item
$\varepsilon_i=\sum_{s\in G(i)\cup G'(i)}e_s\ \leq\ \sum_{s=1}^{p} e_s\  <\  1$
by condition $(b)$\,;

\smallskip
\item
$\eta_i=\sum_{s \in G(i)} M_s\ \leq\ \sum_{s=1}^h |B_s\cap D\cap F| \ \leq\  |F| = k$\,;

\smallskip
\item
$\vartheta_i = |A_i \cap D \cap F|\ \leq\ k$.
\end{itemize}

\smallskip
If $A_i\in\B$, \emph{i.e.} if $i\leq l$, recall that $A_i=\bigsqcup_{s\in S(i)}B_s$ for a suitable
$S(i)\subseteq\{1,\ldots,h \}$.
Since $C_s\subseteq B_s$ for all $s=1,\ldots,h$, it must be $G(i)=S(i)$.
So, for $i\le l$ we have
\begin{eqnarray*}
\nonumber
\eta_i & = &
\sum_{s\in S(i)} M_s\ =\
\sum_{s\in S(i)} |B_s\cap D\cap F|\ =\
\left|\left(\bigsqcup_{s\in S(i)} B_s\right)\cap D\cap F\right|
\\
\nonumber
{} & = &
|A_i\cap D\cap F|\ =\ \vartheta_i,
\end{eqnarray*}
and hence $|\lambda\cap A_i|=N\cdot\mu(A_i)-\varepsilon_i$.
In consequence, for every $i,j\le l$ such that $\mu(A_i) = \mu(A_j)$, one has that
$$\big||\lambda\cap A_i| - |\lambda\cap A_j| \big|\ =\
|N\cdot\mu(A_i) - \varepsilon_i - N\cdot\mu(A_j) + \varepsilon_j|\ =\
|\varepsilon_j - \varepsilon_i|.$$
Now notice that $|\varepsilon_j-\varepsilon_i|\leq
\max\{\varepsilon_i, \varepsilon_j\}<1$, and so
the natural numbers $|\lambda\cap A_i|=|\lambda\cap A_j|$ necessarily coincide.
This completes the proof of $(2)$.

\smallskip
As for $(3)$, notice that $|\lambda\cap A_i|=N\cdot\mu(A_i)+\zeta_i$
where $\zeta_i=(\vartheta_i-\eta_i)-\varepsilon_i$ is such that
$-(k+1)<\zeta_i\le k$. For every $i,j$ such that $\mu(A_j)\ne 0$, we have that
$$\frac{N\cdot\mu(A_i)+\zeta_i}{N\cdot\mu(A_j)+\zeta_j}-\frac{\mu(A_i)}{\mu(A_j)}\ =\
\frac{\mu(A_j)\cdot\zeta_i-\mu(A_i)\cdot\zeta_j}{N\cdot\mu(A_j)^2+\mu(A_j)\cdot\zeta_j}.$$
Now, the absolute value of the numerator
$$|\mu(A_j)\cdot\zeta_i-\mu(A_i)\cdot\zeta_j|\ <\
(\mu(A_i)+\mu(A_j))\cdot(k+1)\ \le\ 2\,\alpha\,(k+1)\,;$$
and the absolute value of the denominator
\begin{eqnarray*}
\nonumber
|N\cdot\mu(A_j)^2+\mu(A_j)\cdot\zeta_j| & > &
N\,c^2 - \alpha\,(k+1)
\\
\nonumber
{} & \hspace{-5cm} \ge &
\hspace{-2.5cm}
\alpha\,(2m+1)\,(k+1)-\alpha\,(k+1)\ =\ 2m\,\alpha\,(k+1).
\end{eqnarray*}
So, we reach the thesis:
$$\left|\frac{|\lambda\cap A_i|}{|\lambda\cap A_j|} -
\frac{\mu(A_i)}{\mu(A_j)}\right|\ <\
\frac{2\,\alpha\,(k+1)}{2m\,\alpha\,(k+1)}\ =\ \frac{1}{m}.$$

\bigskip

\end{document}